\documentclass[11pt,reqno]{amsart}
\usepackage[T1]{fontenc}
\usepackage{lmodern}
\usepackage{amsmath,amssymb,amsthm}
\usepackage[a4paper,margin=1.08in]{geometry}
\usepackage{microtype}
\usepackage[hidelinks]{hyperref}

\hypersetup{
  pdftitle={On the Erd\H{o}s--Ginzburg--Ziv Problem for $\mathbb{Z}_n^r$},
  pdfauthor={Dongchun Han and Hanbin Zhang},
  pdfsubject={Zero-sum sequences, polynomial moments, and truncated polynomial rings},
  pdfkeywords={Erdos-Ginzburg-Ziv constant, zero-sum sequence, polynomial method}
}

\newtheorem{theorem}{Theorem}[section]
\newtheorem{lemma}[theorem]{Lemma}
\newtheorem{proposition}[theorem]{Proposition}
\theoremstyle{remark}

\newtheorem{prob}{Problem}
\numberwithin{equation}{section}

\newcommand{\N}{\mathbb{N}}

\newcommand{\Fp}{\mathbb{F}_p}

\newcommand{\cP}{\mathcal{P}}
\newcommand{\EGZ}{\mathsf{s}}

\DeclareMathOperator{\aff}{aff}

\title[On the Erd\H{o}s--Ginzburg--Ziv Problem for $\mathbb{Z}_n^r$]{On the Erd\H{o}s--Ginzburg--Ziv Problem for $\mathbb{Z}_n^r$}

\author{Dongchun Han}
\address{School of Mathematics, Southwest Jiaotong University, Chengdu 610000, P.R. China}
\email{handongchun@swjtu.edu.cn}

\author{Hanbin Zhang}
\address{School of Mathematics (Zhuhai), Sun Yat-sen University, Zhuhai 519082, Guangdong, P.R. China}
\email{zhanghb68@mail.sysu.edu.cn}

\keywords{Zero-sum sequence, Erd\H{o}s--Ginzburg--Ziv constant,
polynomial method, finite abelian group}
\date{}

\begin{document}

\begin{abstract}
The Erd\H{o}s--Ginzburg--Ziv problem is a classical extremal
problem in discrete geometry. We prove that, for all positive
integers $n$ and $r$, every sequence of $4^r n$ lattice points
in $\mathbb{Z}^r$ contains $n$ points whose centroid is again
a lattice point.
\end{abstract}

\maketitle

\section{Introduction}\label{sec:introduction}

Let $\mathbb{Z}_n^r$ be the direct sum of $r$ copies of
$\mathbb{Z}_n$. The Erd\H{o}s--Ginzburg--Ziv problem asks
for the smallest positive integer $t$ (denoted by
$\EGZ(\mathbb Z_n^r)$) such that among any $t$ lattice
points in $\mathbb{Z}^r$ one can always find $n$ points
whose centroid is again a lattice point. Determining
$\EGZ(\mathbb Z_n^r)$ is a classical extremal problem
in discrete geometry, combinatorics, and number theory.

The classical theorem of Erd\H{o}s, Ginzburg and
Ziv~\cite{EGZ} asserts that
$\EGZ(\mathbb{Z}_n)=2n-1$.
For rank two, Reiher~\cite{Reiher} proved that
$\EGZ(\mathbb{Z}_n^2)=4n-3$, confirming Kemnitz'
conjecture. For general background, we refer to the
survey of Gao and Geroldinger~\cite{GG}.
For groups $\mathbb{Z}_n^r$ of arbitrary rank,
a natural question is how the coefficient of $n$
in an upper bound for $\EGZ(\mathbb{Z}_n^r)$
depends on $r$.
For $n\geq2$, the construction of Harborth~\cite{Harborth},
taking $n-1$ copies of each vector in $\{0,1\}^r$,
gives
\(
  \EGZ(\mathbb{Z}_n^r)\geq 2^r(n-1)+1
\). This lower bound is not always sharp. For example,
Elsholtz~\cite{Elsholtz} proved that
$\EGZ(\mathbb{Z}_n^3)\geq9n-8$
for every odd $n\geq3$.

Alon and Dubiner~\cite{AD} proved that, for fixed
$r\geq2$,
$\EGZ(\mathbb{Z}_n^r)\leq (cr\log r)^r n$,
where $c$ is an absolute constant.
They also posed the following problem
\cite[Section~3]{AD}.
\begin{prob}\label{Problem1}
Is there an absolute constant $c$ such that
$\EGZ(\mathbb{Z}_n^r)\leq c^r n$?
\end{prob}

Recently, Zakharov~\cite[Theorem~1.1]{Zakharov}
proved that, for fixed $r$,
$\EGZ(\mathbb{Z}_p^r)\leq 4^r p$
for all sufficiently large primes $p$.
He also proved that
$\EGZ(\mathbb{Z}_n^r)\leq 4^r n$
when all prime divisors of $n$ are sufficiently
large in terms of $r$.

On the other hand, there is also substantial progress
when the modulus $n$ is fixed and the rank $r$
tends to infinity.
Naslund~\cite{Naslund} used the partition-rank method
to obtain exponential bounds for
$\EGZ(\mathbb{Z}_p^r)$.
He also proved~\cite[Theorem~3]{Naslund} that $\EGZ(\mathbb{Z}_q^r)\leq(q-1)4^r+1$
for a prime power $q$, under the assumption that
$\mathbb{Z}_q^r$ has Property~D
(see \cite[Section~7]{GG}). Later, Fox and Sauermann \cite{FS} relate $\EGZ(\mathbb{Z}_p^r)$ to the cap set problem and proved that $\EGZ(\mathbb{Z}_p^r)\le 2p~\mathsf r(\mathbb Z_p^r)$ (where $\mathsf r(\mathbb Z_p^r)$ is the largest size of a cap-set in $\mathbb Z_p^r$), which slightly improved Naslund's result. Recently, Sauermann~\cite[Corollary~1.2]{Sauermann}
proved, for every fixed prime $p\geq5$, a bound
of the form
$\EGZ(\mathbb{Z}_p^r)\leq A_p(2\sqrt p)^r$,
where $A_p$ depends only on $p$.
More recently, Sauermann and
Zakharov~\cite[Theorem~1.1]{SZ}
proved that, for every $\varepsilon>0$
and every fixed $n\geq2$, $\EGZ(\mathbb{Z}_n^r)\leq D_{\varepsilon,n}(C_\varepsilon n^\varepsilon)^r,$
where $C_\varepsilon$ depends only on $\varepsilon$
and $D_{\varepsilon,n}$ depends only on
$\varepsilon$ and $n$.
These results give strong bounds in the regime
of fixed modulus and large rank.
Sauermann and Zakharov~\cite[Question~1.3]{SZ}
also asked whether there exists an absolute constant
$c$ such that, for every fixed $n\geq2$,
$\EGZ(\mathbb{Z}_n^r)\leq D_n c^r$
for all $r$, where $D_n$ depends only on $n$.

In this paper, we give a uniform bound for
$\EGZ(\mathbb{Z}_n^r)$ valid for every $n$ and
every $r$. Our main result is the following.

\begin{theorem}\label{thm:main}
Let $n,r\in\N$. Then
\begin{equation}\label{eq:main}
  \EGZ(\mathbb{Z}_n^r)
  \leq \binom{2r}{r}(n-1)+1
  \leq 4^r n.
\end{equation}
\end{theorem}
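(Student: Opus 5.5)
The plan has three stages: reduce to prime $n$ by multiplicativity, prove the prime case with a Chevalley--Warning style polynomial argument whose monomial count is exactly $\binom{2r}{r}$, and finally check the numerical inequality.

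\emph{Multiplicativity.} Write $c_r=\binom{2r}{r}$ and let $f(n)=c_r(n-1)+1$. The standard inductive argument gives $\EGZ(\mathbb{Z}_{mn}^r)\le f(m)\,f(n)$-type bounds. A sharper form holds: if $\EGZ(\mathbb{Z}_m^r)\le c_r(m-1)+1$ and $\EGZ(\mathbb{Z}_n^r)\le c_r(n-1)+1$, then $\EGZ(\mathbb{Z}_{mn}^r)\le c_r(mn-1)+1$. To see this, take a sequence of length $c_r(mn-1)+1$. Repeatedly extract disjoint $n$-subsequences whose sums vanish mod $n$. This works while at least $c_r(n-1)+1$ terms remain, so we obtain $c_r(m-1)+1$ blocks. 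Dividing the block sums by $n$ and applying the $m$-bound to them gives $m$ blocks, that is, $mn$ terms, with zero sum mod $mn$. It therefore suffices to treat $n=p$ prime.

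\emph{The prime case.} Let $p$ be prime and let $a_1,\dots,a_N\in\Fp^r$ with $N=c_r(p-1)+1$. I would seek a nonempty index set $I$ with $|I|=p$ and $\sum_{i\in I}a_i=0$. The approach is polynomial:
\begin{itemize}
\item[(i)] Attach variables $x_1,\dots,x_N$ with $x_i\in\{0,1\}$.
\item[(ii)] Impose the $r$ linear conditions $\sum_i x_i a_i=0$ together with conditions forcing $\sum_i x_i\equiv 0$, so that $|I|\in\{0,p,2p,\dots\}$.
\item[(iii)] Rule out the sizes $2p,3p,\dots$.
\end{itemize}
The natural refinement is the moment method suggested by the keywords: work in a truncated polynomial ring. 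Consider $P(t)=\prod_i (1+x_i t)$ modulo $t^{p}$, and also moments $\sum_i x_i a_i^{\alpha}$ for monomials $a^\alpha$ of bounded degree. The count $\binom{2r}{r}=\sum_k\binom{r}{k}^2$ suggests counting pairs of monomials, or equivalently monomials of degree at most $r$ in $2r$ variables that are multilinear in a suitable sense.

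I would then define a polynomial in $x$ whose degree is at most $c_r(p-1)<N$. It is built from products of factors $1-(\text{moment})^{p-1}$, one for each of $c_r$ moment constraints. Chevalley--Warning, or the combinatorial Nullstellensatz, then gives a nontrivial $0/1$ solution in which all these moments vanish. The role of the moments is this: if all power-type moments of the chosen subsequence vanish up to the appropriate degree, then a Newton-identity or Vandermonde argument shows that the chosen multiset is a disjoint union of full "lines" or has size divisible by $p$ with zero sum. One then extracts exactly $p$ elements, for example by observing that the vanishing of the $\binom{\cdot}{p}$-type elementary symmetric moments yields a subset of size exactly $p$ via $e_p$ of the $0/1$ vector.

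\emph{Main obstacle.} The key difficulty is designing exactly $c_r$ constraints whose vanishing forces a zero-sum subsequence of length exactly $p$, not merely a multiple of $p$. I would first try the constraints $\sum_{I}\prod_{i\in I}(\dots)$ given by coefficients of $\prod_i (1+t\,x_i\,u^{a_i})$ in $\Fp[t,u_1,\dots,u_r]$ truncated at total degree. I would aim to show that the relevant coefficient space has dimension $\binom{2r}{r}$, and to apply a Reiher-style counting argument (counting solutions mod $p$) in place of plain existence.

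\emph{Numerical inequality.} The final bound $c_r(n-1)+1\le 4^r n$ is immediate from $\binom{2r}{r}\le 4^r$.
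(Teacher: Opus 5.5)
Your reduction to prime $n$ is correct; it is Lemma~\ref{lem:extraction} plus induction on $n$, exactly as in the paper. The bound $\binom{2r}{r}\le 4^r$ is also fine. The gap is the prime case, which carries the whole content of the theorem and is not proved.

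Your Chevalley--Warning/Nullstellensatz step uses $\binom{2r}{r}$ constraints that are linear in $0/1$ variables. It amounts to Olson's $D(\mathbb{Z}_p^{d})=d(p-1)+1$ applied to the vectors $(f(a_i))_f$, where $f$ runs over a basis of polynomials of degree $\le r$ (that space does have dimension $\binom{2r}{r}$). This yields a nonempty $I$ with $\sum_{i\in I}f(a_i)=0$ for all such $f$. But $|I|$ may be any multiple of $p$ up to $N$, so you still need a separate statement: every such $I$ contains $p$ terms summing to zero. You flag this yourself as the ``main obstacle'' and give no argument for it.

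The tools you mention do not supply one:
\begin{itemize}
\item In characteristic $p$, Newton's identities $ke_k=\sum_{j=1}^{k}(-1)^{j-1}e_{k-j}P_j$ can be solved for $e_k$ only when $k<p$. At $k=p$ the left side is $0$, so power-sum (moment) data does not determine $e_p$, which is exactly the quantity that detects $p$-term zero sums.
\item A Reiher-style count would need a system of congruences linking solutions of sizes $p,2p,3p,\dots$. You have not formulated one, and already for $r=2$ that is the substance of Reiher's proof.
\end{itemize}

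The missing idea is how the paper forces length exactly $p$. It does so by choosing $p$ out of $2p-1$ disjoint blocks, in the Erd\H{o}s--Ginzburg--Ziv manner, rather than by controlling the support of a single solution.
\begin{itemize}
\item Only moments of degree $\le r-1$ are used. This space has dimension $b=\binom{2r-1}{r}$, and $2b=\binom{2r}{r}$.
\item Greedily removing affine bases of the moment vectors (Lemma~\ref{lem:affine}) splits $N\ge 2b(p-1)+1$ terms into at least $2p-1$ blocks of size at most $b$, with nested affine hulls.
\item Hence a term $a_*$ from block $2p-1$ has the same low-degree moments as some affine combination inside each block. By Lemma~\ref{lem:moments} this gives $F_j=\sum_{i\in B_j}\lambda_{j,i}X^{a_i}\equiv X^{a_*}\pmod{J^r}$ in the group algebra $R$.
\item Since $J^{rp}=0$ and $\binom{2p-1-k}{p-k}\equiv 0\pmod p$ for $1\le k\le p-1$, Lemma~\ref{lem:symmetric} gives $e_p(F_1,\dots,F_{2p-1})=(X^{a_*})^p=1$.
\item Each monomial on the left uses exactly one term from each of $p$ distinct blocks. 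Comparing constant coefficients therefore yields a zero-sum subsequence of length exactly $p$.
\end{itemize}
Without this mechanism, or a proof of your extraction claim, the proposal does not establish the prime case, and hence not the theorem.
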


In particular, our result answers both Problem \ref{Problem1} above and Question 1.3 in \cite{SZ}.
Our proof combines the polynomial method with
elementary algebraic techniques.

Section~\ref{sec:preliminaries} contains the background
and auxiliary lemmas. In Section~\ref{sec:proof},
we first establish the prime-modulus bound and then
prove Theorem~\ref{thm:main}.

\section{Preliminaries}\label{sec:preliminaries}

Let $\N$ denote the set of positive integers, and write $[1,m]=\{1,\ldots,m\}$
for $m\in\N$. A sequence over $\mathbb Z_n^r$ is a finite multiset of
elements of $\mathbb Z_n^r$. The Erd\H{o}s--Ginzburg--Ziv constant
$\EGZ(\mathbb Z_n^r)$ is the least positive integer $t$
such that every sequence of $t$ elements of
$\mathbb Z_n^r$ contains $n$ terms whose sum is zero. For more background and related studies on the Erd\H{o}s--Ginzburg--Ziv constant, we refer to \cite{FGZ,GG,GHZ,GeroldingerHalterKoch2006}. Our approach combines the polynomial method (see, e.g., \cite{Guth,Petrov} for related discussion), classical group ring method \cite{OlsonI}, and  elementary algebraic techniques.

We recall the affine-space terminology used below. Let $K$ be
a field and let $W$ be a finite-dimensional vector space over
$K$. 
An affine space in $W$ is a translate
$H=w_0+U=\{w_0+u:u\in U\}$ of a linear subspace $U$;
we denote $\dim H=\dim_K U$. In particular, $W$ itself is an
affine space.

An affine combination of points $w_1,\ldots,w_t$ is a sum
$\sum_{i=1}^t\lambda_iw_i$ with $\lambda_i\in K$ and
$\sum_{i=1}^t\lambda_i=1$. Their affine hull is the set of
all affine combinations, denoted by
\[
 \aff\{w_1,\ldots,w_t\}
 =\left\{\sum_{i=1}^t\lambda_iw_i:
       \lambda_i\in K,\ \sum_{i=1}^t\lambda_i=1\right\}.
\]
Equivalently, it is
$w_1+\operatorname{span}_K\{w_i-w_1:2\leq i\leq t\}$,
where $\operatorname{span}_K$ denotes the set of all linear
combinations over $K$.
The points $w_0,\ldots,w_d$ are affinely independent if
$w_1-w_0,\ldots,w_d-w_0$ are linearly independent. They form
an affine basis of $H$ if they are affinely independent and
their affine hull is $H$. Thus an affine basis of a
$d$-dimensional affine space has $d+1$ points. Every finite
nonempty family contains an affine basis of its affine hull;
in dimension zero, an affine basis consists of one point.

The following observation produces disjoint blocks with a
common point in their affine hulls.

\begin{lemma}\label{lem:affine}
Let $v_1,\ldots,v_N$ be indexed points in an affine
space over a field, and suppose that their affine
hull has dimension at most $b-1$, where $N,b\in\N$.
Then $[1,N]$ has a partition into
$M\geq\lceil N/b\rceil$ nonempty sets
$B_1,\ldots,B_M$, each of size at most $b$, such that
\[
\aff\{v_i:i\in B_1\}
\supseteq\cdots\supseteq
\aff\{v_i:i\in B_M\}.
\]
\end{lemma}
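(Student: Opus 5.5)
We argue by greedy peeling, with induction on $N$. Let $H=\aff\{v_1,\ldots,v_N\}$, so $\dim H\le b-1$. Every finite nonempty family contains an affine basis of its affine hull, so we choose $B_1\subseteq[1,N]$ indexing an affine basis of $H$. Then $|B_1|=\dim H+1\le b$ and $\aff\{v_i:i\in B_1\}=H$.

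Next we pass to the remaining indices $[1,N]\setminus B_1$. If this set is empty, we stop with $M=1$; since $N=|B_1|\le b$, we have $M=1\ge\lceil N/b\rceil$. Otherwise the remaining points have an affine hull $H'\subseteq H$, so $\dim H'\le b-1$ still holds. Applying the same statement inductively (on the number of points) to the remaining $N-|B_1|$ indexed points, we get a partition of them into $M'\ge\lceil (N-|B_1|)/b\rceil$ nonempty blocks $B_2,\ldots,B_{M'+1}$, each of size at most $b$, with decreasing affine hulls. The largest of these is $\aff\{v_i:i\in B_2\}=H'$ (or a subset of it), which lies inside $H=\aff\{v_i:i\in B_1\}$. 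So the chain of inclusions extends to include $B_1$ at the front.

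For the count, $M=1+M'\ge 1+\lceil (N-b)/b\rceil=\lceil N/b\rceil$, because $|B_1|\le b$ and $\lceil x\rceil$ is monotone.

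No step is a real obstacle. The only care needed is that the inductive hypothesis requires the hull dimension bound, which is inherited because affine hulls of subfamilies shrink. Nonemptiness of each block holds because an affine basis of a nonempty family has at least one point. The indexing handles repeated points, since the blocks partition indices rather than distinct points.
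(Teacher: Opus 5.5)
Your proof is correct and is essentially the paper's argument: you repeatedly peel off an affine basis of the remaining points, so each block's hull contains all later blocks. The only difference is presentation: you run an induction on $N$ and count via $1+\lceil (N-b)/b\rceil=\lceil N/b\rceil$, whereas the paper observes $N=\sum_{j}|B_j|\le bM$ directly.
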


\begin{proof}
Choose an affine basis of the given points, let
$B_1$ be its set of indices, and remove those indices.
Repeat with the remaining points until none remain. Each block is an affine basis of the points remaining
when it is chosen. Hence its affine hull contains
all subsequent blocks. Since each block has at most
$b$ elements, $N=\sum_{j=1}^{M}|B_j|\leq bM,$
as required.
\end{proof}

We next introduce the polynomial algebra used in the proof. Let
$p$ be a prime, let $r\in\N$, and put
\begin{equation}\label{eq:ring}
  R=\Fp[X_1,\ldots,X_r]/(X_1^p-1,\ldots,X_r^p-1),
  \qquad J=(X_1-1,\ldots,X_r-1)\subseteq R.
\end{equation}
For $a=(a_1,\ldots,a_r)\in\Fp^r$, we write $X^a=X_1^{a_1}\cdots X_r^{a_r}\in R$. Here and below, exponents are taken modulo $p$. The relations in $R$ give
\begin{equation}\label{eq:monomial-products}
  X^aX^c=X^{a+c},\qquad (X^a)^p=1
  \qquad (a,c\in\Fp^r).
\end{equation}
The residue classes $\{X^a:a\in\Fp^r\}$ form an $\Fp$-basis of
$R$. Thus every $F\in R$ has a unique expansion
$F=\sum_{a\in\Fp^r}c_aX^a$. The $X$-monomials record sums in the group, while the $T$-monomials
record degree in a truncated polynomial ring. The following lemma
relates these two descriptions.

\begin{lemma}\label{lem:moments}
For the ring and ideal in~\eqref{eq:ring}, we have $J^{rp}=0$. Moreover, suppose that $F=\sum_{a\in\Fp^r}c_aX^a\in R$ satisfies
  $\sum_{a\in\Fp^r}c_a f(a)=0$
  for every $f\in\Fp[z_1,\ldots,z_r]$
   with $\deg f\leq r-1$.
Then $F\in J^r$.
\end{lemma}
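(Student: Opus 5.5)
The plan is to change variables to the ``augmentation'' coordinates $T_i=X_i-1$, which turns $R$ into a truncated polynomial ring in which powers of $J$ are easy to read off, and then to identify the coefficients of $F$ in the $T$-basis with moments $\sum_a c_a f(a)$ for explicit polynomials $f$.

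\emph{Step 1: structure of $R$ and of $J^k$.} In characteristic $p$ we have $X_i^p-1=(X_i-1)^p$. Hence, writing $T_i=X_i-1$, we get
\[
R\cong\Fp[T_1,\ldots,T_r]/(T_1^p,\ldots,T_r^p).
\]
Under this isomorphism $J=(T_1,\ldots,T_r)$. The monomials $T^e=T_1^{e_1}\cdots T_r^{e_r}$ with $0\le e_i\le p-1$ form an $\Fp$-basis of $R$. For every $k$, the power $J^k$ is spanned by those basis monomials with $|e|=e_1+\cdots+e_r\ge k$. A nonzero basis monomial has $|e|\le r(p-1)<rp$, so $J^{rp}=0$; in fact already $J^{r(p-1)+1}=0$. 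The statement $F\in J^r$ is therefore equivalent to the following: in the expansion $F=\sum_e d_eT^e$, we have $d_e=0$ whenever $|e|\le r-1$.

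\emph{Step 2: computing $d_e$.} For $a\in\Fp^r$, choose representatives $a_i\in\{0,\ldots,p-1\}$. By the binomial theorem,
\[
X^a=\prod_i(1+T_i)^{a_i}=\sum_e\prod_i\binom{a_i}{e_i}T^e .
\]
Terms with some $e_i\ge p$ do not occur, because $a_i\le p-1$. Consequently
\[
d_e=\sum_a c_a\prod_i\binom{a_i}{e_i}.
\]

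\emph{Step 3: the key point, that binomials are polynomial functions of low degree.} Since $e_i\le p-1$, the factorial $e_i!$ is invertible in $\Fp$. For every integer $a_i$ with $0\le a_i\le p-1$ we then have
\[
\binom{a_i}{e_i}\equiv \frac{a_i(a_i-1)\cdots(a_i-e_i+1)}{e_i!}\pmod p.
\]
Here both sides vanish when $a_i<e_i$, so the identity holds for all such $a_i$. Thus the map $a\mapsto\prod_i\binom{a_i}{e_i}$ on $\Fp^r$ is given by a polynomial $f_e\in\Fp[z_1,\ldots,z_r]$ of degree $|e|$, and this description does not depend on the choice of representatives. For $|e|\le r-1$ the hypothesis on $F$ therefore gives $d_e=\sum_a c_af_e(a)=0$. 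By Step 1, this shows $F\in J^r$.

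The only delicate point is Step 3: one must make sure the binomial coefficients, computed from integer representatives, define genuine polynomial functions on $\Fp$ of the correct degree. This is exactly where the bound $e_i\le p-1$, which makes $e_i!$ a unit, is used.
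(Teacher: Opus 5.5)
Your proposal is correct and matches the paper's proof step for step: you substitute $T_i=X_i-1$ to get the truncated polynomial ring, describe $J^k$ by basis monomials of total degree at least $k$, and identify the $T^e$-coefficient of $F$ with the moment $\sum_a c_a\prod_i\binom{a_i}{e_i}$ of a polynomial of degree $|e|$, using that $e_i!$ is invertible. Your explicit choice of representatives $a_i\in\{0,\ldots,p-1\}$ simply spells out what the paper means by reading integer binomial coefficients in $\Fp$.
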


\begin{proof}
Put $T_i=X_i-1$. Since the characteristic is $p$,
$X_i^p-1=(X_i-1)^p=T_i^p$, and hence
\begin{equation}\label{eq:truncated-ring}
  R\cong\Fp[T_1,\ldots,T_r]/(T_1^p,\ldots,T_r^p),
  \qquad J=(T_1,\ldots,T_r).
\end{equation}
The monomials
$T^\alpha=T_1^{\alpha_1}\cdots T_r^{\alpha_r}$, with
$0\leq\alpha_i<p$, form a basis of this quotient. Their total
degrees are at most $r(p-1)$. The ideal $J^m$ is spanned by those
basis monomials whose total degrees are at least $m$. It follows
that $J^{rp}=0$. In particular, every product of $p$ elements of $J^r$
is zero.

It remains to prove the last assertion. For $0\leq k<p$, define
the polynomial
\[
  \binom{z}{k}=\frac{z(z-1)\cdots(z-k+1)}{k!}\in\Fp[z],
  \qquad \binom{z}{0}=1.
\]
This is well defined because $k!$ is invertible in $\Fp$. Expanding
$X^a$ in the $T$-basis gives
\begin{equation}\label{eq:binomial-expansion}
  X^a=\prod_{i=1}^{r}(1+T_i)^{a_i}
     =\sum_{0\leq\alpha_1,\ldots,\alpha_r<p}
       \left(\prod_{i=1}^{r}\binom{a_i}{\alpha_i}\right)
       T^\alpha,
\end{equation}
where integer binomial coefficients are read in $\Fp$.
Consequently, the coefficient of $T^\alpha$ in $F$ is
\begin{equation}\label{eq:T-coefficient}
  [T^\alpha]F
  =\sum_{a\in\Fp^r}c_a
       \prod_{i=1}^{r}\binom{a_i}{\alpha_i}.
\end{equation}
The product on the right is the value at $a$ of the polynomial
$\prod_{i=1}^{r}\binom{z_i}{\alpha_i}$, whose total degree is
$|\alpha|=\alpha_1+\cdots+\alpha_r$. By
the condition on $F$, the expression in
\eqref{eq:T-coefficient} is zero whenever $|\alpha|\leq r-1$.
Thus only $T$-monomials of total degree at least $r$ occur in $F$,
which means that $F\in J^r$.
\end{proof}


For elements $y_1,\ldots,y_m$ of a commutative ring, write
\[
  e_k(y_1,\ldots,y_m)
  =\sum_{\substack{A\subseteq[1,m]\\|A|=k}}\prod_{j\in A}y_j,
  \qquad e_0=1.
\]
The following identity will allow us to use the congruences
$F_j\equiv F_1\pmod{J^r}$.

\begin{lemma}\label{lem:symmetric}
Let $A$ be a commutative $\Fp$-algebra, and let $I$ be an ideal of
$A$ with $I^p=0$. If $y_j=u+\eta_j$, where $u\in A$ and
$\eta_j\in I$ for $j\in[1,2p-1]$, then $e_p(y_1,\ldots,y_{2p-1})=u^p$.
\end{lemma}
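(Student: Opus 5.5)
The plan is to expand $e_p(y_1,\ldots,y_{2p-1})$ multilinearly in the $\eta_j$ and show that every term containing at least one $\eta$ contributes zero. Write $m=2p-1$. For each $A\subseteq[1,m]$ with $|A|=p$,
\[
\prod_{j\in A}(u+\eta_j)=\sum_{S\subseteq A}u^{p-|S|}\prod_{j\in S}\eta_j .
\]
Summing over $A$ and collecting terms by $S$, a fixed $S$ with $|S|=s\le p$ arises from exactly $\binom{m-s}{p-s}$ sets $A\supseteq S$. Hence
\[
e_p(y_1,\ldots,y_m)=\sum_{s=0}^{p}\binom{2p-1-s}{p-s}\,u^{p-s}\,e_s(\eta_1,\ldots,\eta_m).
\]
The term $s=p$ is $e_p(\eta)$. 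It is a sum of products of $p$ elements of $I$, so it vanishes because $I^p=0$.

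The term $s=0$ is $\binom{2p-1}{p}u^p$. We have
\[
\binom{2p-1}{p}=\binom{2p-1}{p-1}=\prod_{k=1}^{p-1}\frac{p+k}{k}\equiv 1 \pmod p,
\]
since each factor $(p+k)/k$ is $\equiv 1$ modulo $p$. For $p=2$ this reads $\binom{3}{2}=3\equiv 1$. So this term equals $u^p$.

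For $1\le s\le p-1$ the coefficient is $\binom{2p-1-s}{p-s}=\binom{2p-1-s}{p-1}$, and here $p\le 2p-1-s\le 2p-2$. Write $2p-1-s=p+t$ with $0\le t\le p-2$. By Lucas' theorem, $\binom{p+t}{p-1}\equiv \binom{1}{0}\binom{t}{p-1}=0 \pmod p$, because $t<p-1$. More directly, the numerator of $\binom{p+t}{p-1}$ contains the factor $p$ while the denominator $(p-1)!$ does not. So all middle terms vanish in characteristic $p$, and $e_p(y)=u^p$.

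The only delicate point is this binomial bookkeeping, namely checking that the coefficient count $\binom{m-s}{p-s}$ is correct and that it vanishes mod $p$ for exactly $1\le s\le p-1$. Commutativity is what lets us expand the products freely. The hypothesis $I^p=0$ is used only to kill the $s=p$ term.
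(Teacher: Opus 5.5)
Your proof is correct and is the paper's argument: expand $e_p(u+\eta_1,\ldots,u+\eta_{2p-1})$ by the number $s$ of $\eta$-factors with coefficient $\binom{2p-1-s}{p-s}$, kill the $s=p$ term using $I^p=0$, and use $\binom{2p-1}{p}\equiv 1$ and $\binom{2p-1-s}{p-s}\equiv 0 \pmod p$ for $1\le s\le p-1$. Your product-formula and Lucas/valuation checks supply the binomial congruences that the paper states without proof.
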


\begin{proof}
Expand the left side according to the number of factors chosen
from the terms $\eta_j$. This gives
\begin{equation}\label{eq:symmetric-expansion}
\begin{split}
  e_p(u+\eta_1,\ldots,u+\eta_{2p-1})
  =\sum_{k=0}^{p}\binom{2p-1-k}{p-k}
    u^{p-k}e_k(\eta_1,\ldots,\eta_{2p-1}).
\end{split}
\end{equation}
Indeed, after fixing $k$ indices from which the error terms are
chosen, the remaining $p-k$ indices can be chosen in
$\binom{2p-1-k}{p-k}$ ways.

For $k=0$, we have $\binom{2p-1}{p}=1$.
For $1\leq k\leq p-1$, $\binom{2p-1-k}{p-k}=0$. Finally,
$e_p(\eta_1,\ldots,\eta_{2p-1})=0$, because each of its summands
is a product of $p$ elements of $I$. Thus only $u^p$ remains in
\eqref{eq:symmetric-expansion}.
\end{proof}

\section{Proof of the main theorem}\label{sec:proof}

We first prove the following crucial bound for the prime case.

\begin{proposition}\label{prop:prime}
Let $p$ be a prime and let $r\in\N$. Then $\EGZ(\mathbb Z_p^r)\leq\binom{2r}{r}(p-1)+1$.
\end{proposition}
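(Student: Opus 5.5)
The plan is to combine the three lemmas of Section~\ref{sec:preliminaries}: Lemma~\ref{lem:affine} to produce $2p-1$ blocks, Lemma~\ref{lem:moments} to turn the blocks into elements of $R$ that agree modulo $J^r$, and Lemma~\ref{lem:symmetric} to force a zero-sum of length $p$.

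\emph{Setup.} Let $N=\binom{2r}{r}(p-1)+1$ and let $a_1,\ldots,a_N\in\Fp^r$. Let $\mathcal M$ be the set of nonconstant monomials in $z_1,\ldots,z_r$ of degree at most $r-1$. The number of monomials of degree $\le r-1$ in $r$ variables is $\binom{2r-1}{r}=\tfrac12\binom{2r}{r}$, so $|\mathcal M|=\tfrac12\binom{2r}{r}-1$. Define the Veronese-type points
\[
v_i=(m(a_i))_{m\in\mathcal M}\in\Fp^{\mathcal M}.
\]
Their affine hull has dimension at most $b-1$ with $b=\tfrac12\binom{2r}{r}$.

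\emph{Step 1: blocks.} Lemma~\ref{lem:affine} gives a partition into $M\ge\lceil N/b\rceil$ blocks $B_1,\ldots,B_M$ with nested affine hulls. Since
\[
\frac{N}{b}=2(p-1)+\frac{1}{b}>2p-2,
\]
we get $M\ge 2p-1$, and we keep the first $2p-1$ blocks. By nestedness, a point $w\in\aff\{v_i:i\in B_{2p-1}\}$ lies in every hull. So for each $j$ there are $\lambda_{j,i}\in\Fp$ with $\sum_{i\in B_j}\lambda_{j,i}=1$ and $\sum_{i\in B_j}\lambda_{j,i}v_i=w$.

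\emph{Step 2: congruent elements of $R$.} Put $F_j=\sum_{i\in B_j}\lambda_{j,i}X^{a_i}\in R$. For any $f$ with $\deg f\le r-1$, the moment $\sum_i\lambda_{j,i}f(a_i)$ is determined by $w$ and by $\sum_i\lambda_{j,i}=1$ (which handles the constant term), so it is independent of $j$. Hence $F_j-F_1$ satisfies the hypothesis of Lemma~\ref{lem:moments}, and $F_j\in F_1+J^r$. Take $I=J^r$; then $I^p\subseteq J^{rp}=0$ by Lemma~\ref{lem:moments}. Lemma~\ref{lem:symmetric} now gives $e_p(F_1,\ldots,F_{2p-1})=F_1^p$. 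By Frobenius and $\lambda^p=\lambda$ in $\Fp$,
\[
F_1^p=\sum_i\lambda_{1,i}X^{pa_i}=\sum_i\lambda_{1,i}=1 .
\]

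\emph{Step 3: extracting the zero-sum.} Expand $e_p(F_1,\ldots,F_{2p-1})$: it is a sum over $p$-sets $A$ of blocks and over choices of one index $i_j\in B_j$ for each $j\in A$, of $\prod_j\lambda_{j,i_j}\,X^{\sum_j a_{i_j}}$. The coefficient of $X^0$ equals $1\ne0$, so some such choice satisfies $\sum_{j\in A}a_{i_j}=0$. The indices are distinct because the blocks are disjoint, which gives $p$ terms summing to zero.

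The main point needing care is the dimension count in Step 1, namely that $b=\tfrac12\binom{2r}{r}$ yields at least $2p-1$ blocks. This count is what produces the constant $\binom{2r}{r}$. Everything after that is formal.
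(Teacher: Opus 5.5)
Your proof is correct and follows essentially the paper's argument: nested affine blocks from Lemma~\ref{lem:affine}, congruence modulo $J^r$ via Lemma~\ref{lem:moments}, then Lemma~\ref{lem:symmetric} with $I=J^r$, and finally reading off the coefficient of $1$. The only cosmetic difference is the anchor: the paper takes the common point to be an actual term $v_{i_*}$ with $i_*\in B_{2p-1}$, so $u=X^{a_*}$ and $u^p=1$ is immediate, whereas you take $u=F_1$ and get $F_1^p=1$ from Frobenius together with $\sum_i\lambda_{1,i}=1$.
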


\begin{proof}
Let $\cP_{r-1}=\{f\in\Fp[z_1,\ldots,z_r]:\deg f\leq r-1\}$,
and denote $b=\dim_{\Fp}\cP_{r-1}=\binom{2r-1}{r}.$ Let $S=a_1\cdot\ldots\cdot a_N$ be a sequence over $\mathbb Z_p^r$ with
$N\geq2b(p-1)+1$. We shall show that $S$ contains a zero-sum
subsequence of length $p$.

We first construct disjoint blocks with matching moments. Choose
a basis $1,f_1,\ldots,f_{b-1}$ of $\cP_{r-1}$ and, for each
$i\in[1,N]$, put
\[
  v_i=(f_1(a_i),\ldots,f_{b-1}(a_i))\in\Fp^{b-1}.
\]
By Lemma~\ref{lem:affine}, the indices can be
partitioned into $M$ nonempty blocks $B_1,\ldots,B_M$, each of size at most $b$.
Since $M\geq\lceil N/b\rceil\geq 2p-1$, we consider the first $2p-1$ blocks. Choose $i_*\in B_{2p-1}$,
and write $a_*=a_{i_*}$ and $v^*=v_{i_*}$. For each $j\in[1,2p-1]$, choose coefficients
$\lambda_{j,i}\in\Fp$ such that
\begin{equation}\label{eq:affine-weights}
 \sum_{i\in B_j}\lambda_{j,i}=1,\qquad
 \sum_{i\in B_j}\lambda_{j,i}v_i=v^*.
\end{equation}
For the last block, take $\lambda_{2p-1,i_*}=1$ and
$\lambda_{2p-1,i}=0$ for $i\in B_{2p-1}\setminus\{i_*\}$.
Since $1,f_1,\ldots,f_{b-1}$ is a basis, we obtain
\begin{equation}\label{eq:matching-moments}
 \sum_{i\in B_j}\lambda_{j,i}f(a_i)=f(a_*)
 \qquad\text{for every }f\in\cP_{r-1}
 \text{ and }j\in[1,2p-1].
\end{equation}

 In the ring $R$ of
\eqref{eq:ring}, define $F_j=\sum_{i\in B_j}\lambda_{j,i}X^{a_i}  (j\in[1,2p-1])$.
In particular, $F_{2p-1}=X^{a_*}$.
By~\eqref{eq:matching-moments} and Lemma~\ref{lem:moments},
\begin{equation}\label{eq:anchor-congruence}
 F_j-X^{a_*}\in J^r\qquad (j\in[1,2p-1]).
\end{equation}
If a vector occurs more than once, its coefficients are collected
before applying that lemma. Since $J^{rp}=0$,
Lemma~\ref{lem:symmetric}, with $u=X^{a_*}$ and $I=J^r$,
yields
\begin{equation}\label{eq:ep-identity}
 \sum_{\substack{A\subseteq[1,2p-1]\\|A|=p}}
       \prod_{j\in A}F_j
 =(X^{a_*})^p=1.
\end{equation}
    It remains to interpret the coefficient of $1$. For a fixed
$A\subseteq[1,2p-1]$ with $|A|=p$, expansion gives
\begin{equation}\label{eq:selection-expansion}
 \prod_{j\in A}F_j
 =\sum_{\substack{i_j\in B_j\\j\in A}}
   \left(\prod_{j\in A}\lambda_{j,i_j}\right)
   X^{\sum_{j\in A}a_{i_j}}.
\end{equation}
The monomial in a summand is $1$ if and only if
$\sum_{j\in A}a_{i_j}=0$ in $\Fp^r$.
The blocks are disjoint, so each selection uses $p$ distinct
occurrences of $S$. If $S$ had no zero-sum subsequence of length
$p$, every product on the left of~\eqref{eq:ep-identity}
would have coefficient of $1$ equal to zero.
This contradicts the right side, whose coefficient of $1$ is $1$.
Finally, $2b=\binom{2r}{r}$ gives the asserted bound.
\end{proof}

We use the following standard inductive inequality; see
\cite[Lemma~2.1(2)]{FGZ} and
\cite[(2)]{AD}.


\begin{lemma}\label{lem:extraction}
Let $a,b,r\in\N$. Then $\EGZ(\mathbb{Z}_{ab}^r)\leq a\bigl(\EGZ(\mathbb{Z}_b^r)-1\bigr)+\EGZ(\mathbb{Z}_a^r)$.

\end{lemma}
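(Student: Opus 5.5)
The plan is the classical two-stage extraction argument. Put $s_a=\EGZ(\mathbb Z_a^r)$ and $s_b=\EGZ(\mathbb Z_b^r)$, and let $S$ be a sequence over $\mathbb Z_{ab}^r$ of length
\[
N=a(s_b-1)+s_a .
\]
We must find $ab$ terms of $S$ whose sum is $0$ in $\mathbb Z_{ab}^r$. Consider the reduction homomorphism $\pi\colon\mathbb Z_{ab}^r\to\mathbb Z_a^r$, which reduces each coordinate modulo $a$. Its kernel is $a\mathbb Z_{ab}^r$, and this kernel is isomorphic to $\mathbb Z_b^r$ via $ax\mapsto x \bmod b$.

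\textbf{Step 1 (greedy extraction).} We repeatedly extract blocks. As long as the remaining part of $S$ has at least $s_a$ terms, the definition of $s_a$ applied to its image under $\pi$ gives $a$ terms whose images sum to $0$ in $\mathbb Z_a^r$. We remove these $a$ terms and call them $T_1$, then repeat. After $k$ extractions, $N-ka$ terms remain. Hence the process continues while
\[
N-ka\geq s_a ,
\]
that is, for all $k\leq s_b-1$. It therefore produces at least $s_b$ pairwise disjoint blocks $T_1,\ldots,T_{s_b}$. Each block has size $a$, and each block sum $\sigma_j=\sigma(T_j)$ lies in $\ker\pi$.

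\textbf{Step 2 (combining blocks).} Write $\sigma_j=ax_j$ and regard $x_j\in\mathbb Z_b^r$ through the isomorphism above. The definition of $s_b$ gives $b$ indices $j$ with $\sum x_j=0$ in $\mathbb Z_b^r$. For these indices, $\sum \sigma_j=a\sum x_j=0$ in $\mathbb Z_{ab}^r$. The union of the corresponding blocks is a subsequence of $S$ with exactly $ab$ terms and sum $0$, as required.

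\textbf{Expected obstacle.} The only point needing care is the identification of $\ker\pi$ with $\mathbb Z_b^r$. The map $ax\mapsto x\bmod b$ is well defined: if $ax\equiv ax'\pmod{ab}$, then $x\equiv x'\pmod b$. It is also clearly a group isomorphism. It remains to handle degenerate cases such as $a=1$ or $b=1$ and to check the counting. If $a=1$, then $s_a=1$ and the inequality is trivial. If $b=1$, then $s_b=1$ and the inequality reads $\EGZ(\mathbb Z_a^r)\leq\EGZ(\mathbb Z_a^r)$. Neither case needs special treatment, since the general argument already covers them.
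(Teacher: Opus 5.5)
Your argument is correct and is the standard two-stage extraction proof behind the references the paper cites for this lemma; the paper itself gives no proof, only the citations to \cite[Lemma~2.1(2)]{FGZ} and \cite[(2)]{AD}. The steps all check out: the greedy count $N-ka\geq \EGZ(\mathbb Z_a^r)$ for $k\leq \EGZ(\mathbb Z_b^r)-1$ yields $\EGZ(\mathbb Z_b^r)$ disjoint blocks of length $a$, the identification $a\mathbb Z_{ab}^r\cong\mathbb Z_b^r$ is well defined, and applying the definition of $\EGZ(\mathbb Z_b^r)$ to the block sums gives $ab$ terms summing to $0$.
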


\begin{proof}[Proof of Theorem~\ref{thm:main}]
Fix $r\in\N$ and put $K_r=\binom{2r}{r}$. We prove $\EGZ(\mathbb{Z}_n^r)\leq K_r(n-1)+1$
by strong induction on $n$. The case $n=1$ is immediate, and the
case of prime $n$ follows from Proposition~\ref{prop:prime}.
Suppose that $n=pm$, where $p$ is a prime and $m\geq2$.
By Lemma~\ref{lem:extraction}, the induction hypothesis, and
Proposition~\ref{prop:prime}, we obtain
\begin{align*}
  \EGZ(\mathbb{Z}_{pm}^r)\leq p\bigl(\EGZ(\mathbb{Z}_m^r)-1\bigr)+\EGZ(\mathbb{Z}_p^r)\leq K_r(pm-1)+1.
\end{align*}
This completes the induction. Finally, $K_r(n-1)+1\leq4^r n$. The theorem follows.
\end{proof}

\medskip
\noindent\textbf{Statement of AI.} During the preparation of this work, the authors used ChatGPT to suggest simplifications for certain proof strategies and polish some statements. The authors are completely responsible for the content of the paper.

\medskip
\noindent\textbf{Acknowledgment.} This work was supported by the National Natural Science Foundation of China Grant No.~12571578 and Guangdong
Basic and Applied Basic Research Foundation Grant No.~2024A1515012564.


\end{document}